\def\Rset{\mathbb{R}}
\def\ds{\displaystyle}
\newtheorem{lem}{Lemma}
\newtheorem{definition}{Definition} %u
\newtheorem{prop}{Proposition}
\newtheorem{assumption}{Assumption}
\title[A new proof of the competitive exclusion principle]{A new proof of the competitive exclusion principle in the chemostat}
\date{\today}
\author{Alain Rapaport}
\address{MISTEA (Mathematics, Informatics and Statistics for
  Environmental and Agronomic Sciences), Univ. Montpellier, INRA,
  Montpellier SupAgro, Place Pierre Viala, 34060 Montpellier, France}
\email{alain.rapaport@inra.fr}
\author{Mario Veruete}
\address{IMAG (Institut Montpéllierain Alexander Grothendieck),
  Univ. Montpellier, CNRS, Place Eugène Bataillon, 34095, Montpellier, France}
\email{mario.veruete@umontpellier.fr}
\begin{document}

\begin{abstract}
We give an new proof of the well-known competitive
exclusion principle in the chemostat model with $n$ species competing
for a single resource, for any set of increasing growth functions. 
The proof is constructed by induction on the number of the species,
after being ordered. It uses elementary analysis and comparisons of
solutions of ordinary differential equations.\\

\noindent {\sc Keywords.} Chemostat model, competitive exclusion, global
stability, comparison principle, induction.\\

\noindent {\sc Mathematics Subject Classification.} 34K20, 92D25.
\end{abstract}

\maketitle

\section{Introduction}

The Competitive Exclusion Principle (CEP) has a long history in the
scientific literature.
Since the thirties, the Russian botanist Gause conducted experiments on the
growth of yeasts and paramecia in mixed cultures, and reported that the
most competitive species systematically eliminated the other
\cite{G32}. In his book ``The struggle for existence'', he showed that
competitive exclusion had indeed a more universal scope: two
similar living organisms evolving in the same environment and competing
for a shared resource cannot coexist for ever, one of them having
always a slight advantage over the other one, or being more adapted to
the ecosystem \cite{G34}.
In the 1960s, this statement has become quite popular in ecology but
also in economics: the CEP applies to many kinds of ecosystems, and
not only for microorganisms, since there are consumers and resources
\cite{H60}. It is also commonly taught as ``Gause's law'' in theoretical ecology.\\

However, it was not until the 1970s that the first statement of a mathematical
theorem was found in the literature, along with its proof
\cite{HHW77}, for the chemostat model.
It refers to the mathematical result that
establishes conditions under which almost all solutions converge
toward a steady-state of the system having at most one species.
The chemostat model is widely used in microbiology and ecology as a
mathematical representation of growth of micro-organisms in ecosystems that are
continuously fed with nutrients. Several textbooks on the mathematical
analysis of this model with one and more species are available \cite{SW95,AA11,HLRS17}.
The chemostat model can be also considered as a quite general resource/consumers model \cite{MBN03}. 
The CEP has also a long history in the literature of bio-mathematics. Hsu, Hubbell and Waltman have
proposed a first proof in 1977 for Monod's functions as particular
growth rates \cite{HHW77}. Hsu generalized this result in 1978 for different removal rates \cite{H78}.
These two contributions use explicit Lyapunov functions to demonstrate the
overall convergence. In
1980, Armstrong and McGehee have given a simple proof for any monotonic growth
functions but for particular
initial conditions belonging to an invariant set \cite{AM80}.
In 1985, Butler and Wolkowicz proposed a proof for any monotonic growth
function \cite{BW85}. One of the difficulties to prove the global
stability originates from the fact that the graphs of any growth
function can intersect one another at several points. During the
transients a species could dominate the competition without being the
final winner of the competition on the long run.  
Finally, it was in 1992 that Wolkowicz and Lu proposed a proof, based
on a Lyapunov function, for growth functions more general than Monod functions
(but under additional technical assumptions) and different removal
rates \cite{WL92}. 
This result has been later extended or complemented \cite{L98,RH08,RDH09,SM11,S13}.
However, the proof of global stability for any
monotonic growth functions and removal rates remains today an open
mathematical problem \cite{LLS03}.\\

In the present paper, we propose a new
proof of the CEP for any monotonic growth functions but under identical
removal rates. The existing proofs rely on relatively sophisticated
tools, such as $\omega$-limit sets \cite{BW85}, Lyapunov functions
and LaSalle Invariance Principle \cite{HHW77,WL92} and the theory
of asymptotically autonomous systems (e.g. Appendix F in \cite{SW95}).
We show here that it
is possible to obtain a proof with elementary analysis, based on
single comparisons of solutions of ordinary differential equations.
While species are sorted in ascending break even concentrations,
the key of the proof relies on the observation of the time evolution of
the proportions $r_{i}$ of the concentration of species $i$ over the
one of species $1$. 
Whatever is the initial condition and how the
transients could exhibit an alternation of
dominance among species or not, there always exists a finite time at the
end of which the proportion $r_{n}$ is decreasing exponentially for any
future time. We show that this property is due to the
level of the resource that reaches in finite time an interval which is
unfavourable for the $n$-th species, and belongs to this interval for ever.
We show that these two properties hold for the other species by induction on
the index set $\{n,n-1,\cdots,2\}$: this is our main result
(Proposition \ref{MainProp} given in Section \ref{secrecursion}). 
This proves that the only winner of the competition is the first species.

\section{Competitive exclusion principle for the chemostat}

The classical chemostat model for $N\in\mathbb{N}^*$ species competing
for a single resource is given by the system of differential equations

\begin{equation}
\label{sys_n_souches}
\left\{
\begin{array}{lll}
\dot s = &  D(S_{\rm in}-s)-\sum_{i=1}^N \mu_{i}(s)x_{i}\\[4mm]
\dot x_{i} =  & \mu_{i}(s)x_{i}-D x_{i} \quad (1\leq i\leq N)
\end{array}\right. ,
\end{equation}\
where the operating parameters $D>0$, $S_{\rm in}>0$ are the
removal rate and the incoming density of resource (or input substrate
concentration). The variable $s(t)$ denotes the density of resource
(or substrate concentration) at time $t$. For $1\leq i \leq N$, $x_i(t)$
represents the density of the $i$-th species and $\mu_i(\cdot)$ is
the specific growth rate function of species $i$. 
In this writing we have assumed, without any loss of generality, that
the yield coefficients of resource $s$ transformed in $x_{i}$ are all
identically equal to $1$.
In microbial ecology, the growth function $\mu_i$ often takes the form
of a Monod's function $\mu(s)= \mu_{\rm max} \frac{s}{k+s}$, but we
consider here more general ecosystems without particularizing the
expression of the growth function. We make the following assumption.

\begin{assumption}[Growth function]
\label{assumption1}
For each $1\leq i \leq N$, we assume that
\begin{enumerate}
\item $\mu_i\in C^1(\mathbb{R}^+)$.
\item $\mu_i(0)=0$.
\item $\mu_i(\cdot)$ is an increasing of function  of $s$. 
\end{enumerate}
\end{assumption}

One can straightforwardly check that the positive orthant of
$\Rset^{N+1}$ is invariant by the dynamics \eqref{sys_n_souches}.

As often considered in the literature, we associate to each growth
function the {\em break-even concentration} defined as follows.

\begin{definition}[Break-even concentration] \label{def_bec}
Under Assumption \ref{assumption1}, for a given number $D>0$, the break-even concentration $\lambda_i=\lambda_i(D)$ for the $i$-th species is defined as the unique solution of the equation $\mu_i(s)=D$, when it exists. When there is no solution to this equation, we set $\lambda_i =\infty$. 
\end{definition}

%Without loss of generality, we shall assume that species are ordered as follows.
%
\begin{assumption}\label{assumption2} 
Species have distinct break-even concentrations, and without loss of
generality are enumerated by indices such that
\begin{equation}\label{orderBEC}
\lambda_1<\lambda_2< \cdots < \lambda_N.
\end{equation}
\end{assumption}

In Figure \ref{growthsFunctionsFigure}, we have represented several
growth functions and the removal rate (in dashed line).
\begin{figure}[ht!]
\begin{center}
\includegraphics[scale=0.7]{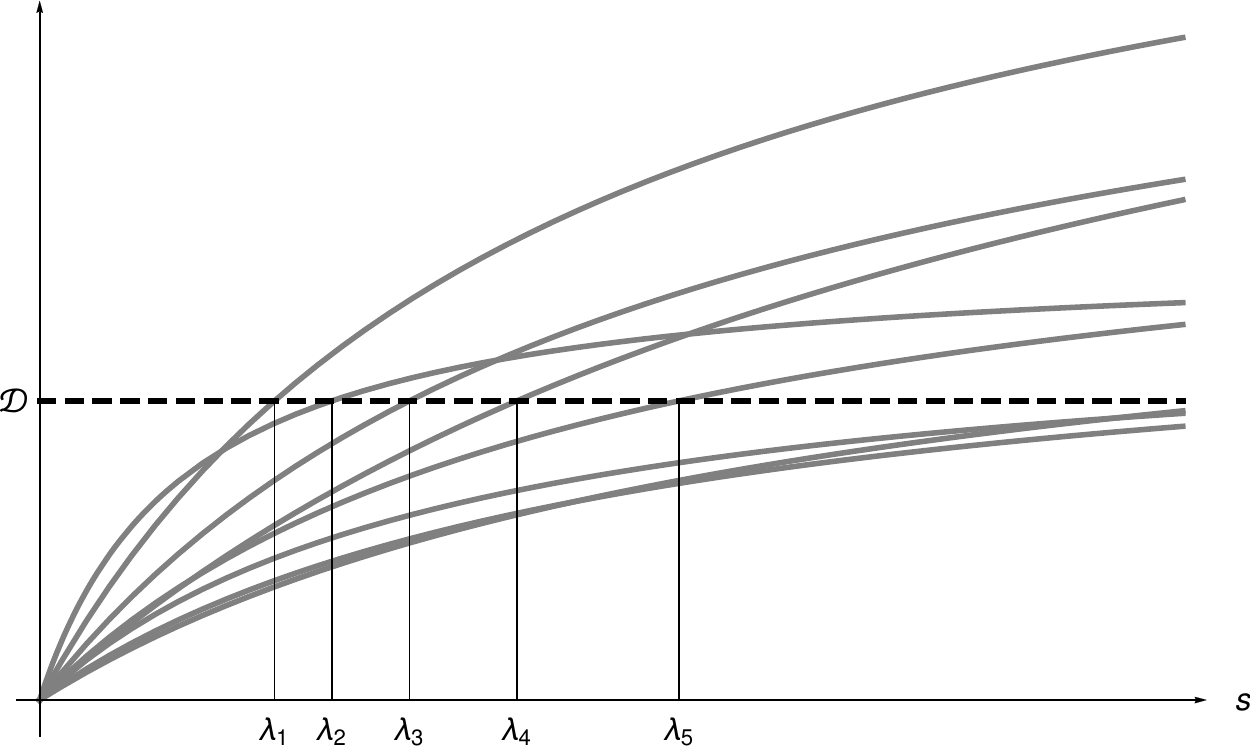}
\caption{Growth functions and their break-even concentrations.}
\label{growthsFunctionsFigure}
\end{center}
\end{figure}
 
We shall see further how the particular cases when some numbers
$\lambda_{i}$ are identical can be tackled, packing the
corresponding species (see Section \ref{sec_identical} below). 
We first do not consider these non generic situations for sake of
simplicity of the presentation.

\medskip

We recall the statement of the Competitive Exclusion Principle.

\begin{prop}[Competitive exclusion principle]\label{mainresult} 
Assume one has $\lambda_{1}<S_{\rm in}$ with Assumptions \ref{assumption1} and \ref{assumption2} fulfilled.
For any non-negative initial condition with $x_1(0)>0$, the solution of the system \eqref{sys_n_souches} converges to the equilibrium point $(\lambda_1,S_{\rm in}-\lambda_1,0,\ldots,0)$.
\end{prop}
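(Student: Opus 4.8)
The plan is to reduce the convergence statement to two facts: that the total mass relaxes to $S_{\rm in}$, and that every species other than the first is driven to extinction. For the first fact, set $z = s + \sum_{i=1}^N x_i$ and add the equations of \eqref{sys_n_souches} to obtain $\dot z = D(S_{\rm in} - z)$, so that $z(t) \to S_{\rm in}$ exponentially and, in particular, every trajectory is bounded with $0 \le x_i(t) \le z(t)$. Since $x_1(0) > 0$ and $\dot x_1 = (\mu_1(s) - D)\,x_1$, the component $x_1(t)$ stays positive for all $t$, so the proportions $r_i = x_i / x_1$ $(i \ge 2)$ are well defined, and a direct computation gives the decoupled relation
\begin{equation*}
\dot r_i = \big(\mu_i(s) - \mu_1(s)\big)\, r_i .
\end{equation*}
Because $x_1$ is bounded, to conclude $x_i(t) \to 0$ for each $i \ge 2$ it is enough to show that $\int_0^{t} \big(\mu_i(s(\tau)) - \mu_1(s(\tau))\big)\, d\tau \to -\infty$.

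The whole difficulty is thus concentrated in the behaviour of the scalar resource $s(t)$, and the core of the argument is to confine it. From $\mu_i \ge 0$ and $x_i \ge 0$ one has $\dot s \le D(S_{\rm in} - s)$, hence $\limsup_{t} s(t) \le S_{\rm in}$; symmetrically, if $s$ came close to $0$ then all growth rates would drop below $D$, the total biomass would decay, and $z - s \to 0$ would force $s$ back up toward $S_{\rm in} > \lambda_1$, yielding $\liminf_{t} s(t) > 0$. The key monotone mechanism is the following: on any region $\{s \ge \lambda_n + \delta\}$ every species satisfies $\mu_i(s) \ge \mu_i(\lambda_n + \delta) > \mu_i(\lambda_i) = D$ (using $\lambda_i \le \lambda_n$ and monotonicity), so the total biomass is strictly increasing there; since it is bounded, the trajectory cannot remain in this region, and one shows that $s$ enters in finite time and stays forever in an interval $I_n$ unfavourable to species $n$, namely one on which $\mu_n(s) - \mu_1(s) \le -\varepsilon < 0$. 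Once $s$ is trapped in $I_n$, the relation above gives $\dot r_n \le -\varepsilon\, r_n$ for all large $t$, whence $r_n \to 0$ and $x_n \to 0$.

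The passage from the top species to all the others is carried out by induction on $k \in \{n, n-1, \dots, 2\}$, which is the content of Proposition \ref{MainProp}: assuming $x_{k+1}, \dots, x_n \to 0$, the effective competition takes place among species $1, \dots, k$, whose largest break-even concentration is $\lambda_k$, and the same confinement argument, now with threshold $\lambda_k$ in place of $\lambda_n$, shows that $s$ is eventually trapped in an interval on which $\mu_k(s) < \mu_1(s)$, so that $x_k \to 0$. Running the induction down to $k = 2$ gives $x_i \to 0$ for every $i \ge 2$. It then remains to treat species $1$ alone: the same confinement with threshold $\lambda_1$ gives $\limsup_t s(t) \le \lambda_1$, while the symmetric argument on $\{s \le \lambda_1 - \delta\}$, where $\mu_1(s) < D$ makes the biomass decay and pushes $s$ back up through $z \to S_{\rm in}$, gives $\liminf_t s(t) \ge \lambda_1$; hence $s \to \lambda_1$, and then $x_1 = z - s - \sum_{i \ge 2} x_i \to S_{\rm in} - \lambda_1$. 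Together with $x_i \to 0$ this is exactly convergence to $(\lambda_1, S_{\rm in} - \lambda_1, 0, \dots, 0)$.

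I expect the genuine obstacle to be the confinement step, and precisely the fact that the graphs of $\mu_n$ and $\mu_1$ may cross several times, so that the sign of $\mu_n(s) - \mu_1(s)$ is not controlled by $s$ alone and species $n$ can transiently dominate. Establishing that $s$ reaches and never leaves an interval on which this difference is \emph{uniformly} negative — rather than merely spending finite total time at high resource — is where the comparison of solutions of the scalar $s$-equation must be combined carefully with the boundedness coming from mass conservation and with the lower bound $\liminf_t s(t) > 0$; the induction is precisely the device that lowers the relevant threshold from $\lambda_n$ down to $\lambda_1$ without ever invoking $\omega$-limit sets or asymptotically autonomous systems.
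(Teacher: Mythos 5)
Your skeleton coincides with the paper's (sort the species by break-even concentration, track the ratios $r_i = x_i/x_1$ whose dynamics is $\dot r_i = (\mu_i(s)-\mu_1(s))r_i$, trap $s$ in an interval unfavourable to the last surviving competitor, and run a backward induction), but the proposal leaves open exactly the step that carries all of the difficulty, and the arguments you do give for it are of the wrong logical type. Every confinement claim in your text ($\liminf_t s(t)>0$; ``$s$ enters in finite time and stays forever in $I_n$''; $\limsup_t s(t)\le \lambda_1$; $\liminf_t s(t)\ge \lambda_1$) is supported by a contradiction of the form: if $s$ stayed in the bad region forever, the total biomass would grow (or decay) exponentially, contradicting boundedness (or mass conservation). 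Such arguments prove only \emph{recurrence} --- the trajectory must leave the bad region at arbitrarily large times --- never \emph{permanence} --- that it eventually stops returning. Since the graphs of the $\mu_i$ may cross repeatedly, nothing in your sketch prevents $s$ from oscillating in and out of the region $\{s > \lambda_n + \delta\}$ forever, with $\int_0^{t}(\mu_n(s(\tau))-\mu_1(s(\tau)))\,d\tau$ staying bounded below; you concede this yourself in your last paragraph, so what you have is a plan, not a proof.

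The missing idea --- and the actual pivot of the paper's argument --- is a \emph{uniform positive lower bound on the total biomass} (Lemma \ref{Lemme1}, item 3), not the bound $\liminf_t s(t)>0$ that you propose to combine in. It is proved by a genuine permanence argument: by mass conservation, whenever $b$ is small at large time, $s$ is forced close to $S_{\rm in}$, where every species with $\lambda_i<S_{\rm in}$ satisfies $\mu_i(s)-D>\epsilon$, so $b$ cannot enter or remain in a neighbourhood of $0$. With $b(t)\ge \eta/2$ in hand, the paper writes $\dot s = b(t)\,[Dz(t)-\bar\mu(s,p(t))]$ with $z=(S_{\rm in}-s)/b \to 1$ (Lemma \ref{Lemme2}), so that for $s>s_{n-1}^+$ one gets $\dot s \le -\eta\gamma^+/4<0$ \emph{uniformly}; this one-sided differential inequality is what yields finite-time entry into, and forward invariance of, the interval $I_{n-1}$ --- permanence rather than recurrence. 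The same quantitative estimate is what makes the induction step rigorous: once $\sum_{j>i}p_j(t)<\varepsilon$, one still has $\bar\mu(s,p(t)) \ge (1-\varepsilon)\min_{j\le i}\mu_j(s) \ge D^+ + \gamma^+/4$ for $s>s_{i-1}^+$, keeping $\dot s$ uniformly negative there; your phrase ``the effective competition takes place among species $1,\dots,k$'' needs precisely this bound to mean anything. Your final step $s\to\lambda_1$ suffers from the same recurrence/permanence gap, and the paper closes it the same way, by controlling the sign of $\dot s = (S_{\rm in}-s)(D-\mu_1(s)r(t))$ uniformly on either side of $[\lambda_1-\epsilon,\lambda_1+\epsilon]$, using that both $b$ and $S_{\rm in}-s$ are bounded below by positive constants.
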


\section{Proof of Proposition \ref{mainresult}}

The proof is based on a backward inductive argument: we show that the
proportion, with respect of the total biomass, of each species goes to
$0$ when $t$ tends to $+\infty$ excepted for the species with minimal break-even concentration.

\subsection{Change of coordinates}
We introduce the \emph{total biomass} $b$ and the \emph{proportion's vector} $p:=(p_i)_{1\leq i \leq N}$ where:

\begin{equation}
b:= \sum_{i=1}^N x_i \quad {\rm and} \quad p_i := \frac{x_i}{b}.
\end{equation}

Additionally, we define the function $\bar\mu(s,p):= \sum_{i=1}^N p_i \mu_i(s)$. In those new variables, one can easily check that the system \eqref{sys_n_souches} writes:

\begin{equation}
\label{sys_n_sbp}
\left\{
\begin{array}{lll}
\dot s & = &  D(S_{\rm in}-s)-\bar\mu(s,p)b\\
\dot b & = & \bar\mu(s,p)b-Db\\
\dot p_{i} & =  & p_{i}(\mu_{i}(s)-\bar\mu(s,p)) \qquad
(1\leq i\leq n).
\end{array}\right.
\end{equation}

\subsection{Non extinction of the total biomass}

We first give a necessary and sufficient condition for the persistence
of the total biomass, of interest in itself.

\begin{lem}\label{Lemme1}
Consider that Assumptions
\ref{assumption1} and \ref{assumption2} are fulfilled.
\begin{enumerate}
\item Whatever is the initial condition of \eqref{sys_n_souches} , the solution verifies \[\lim_{t\to+\infty} b(t)+s(t)=S_{in}.\]
\item Any species $j$ with $\lambda_{j}\geq S_{\emph{in}}$ satisfies
  $\lim_{t\to+\infty} x_{j}(t)=0$, whatever is the initial condition
  of \eqref{sys_n_souches}.
\item When $\lambda_{1}<S_{\rm in}$, for any non-negative initial condition
such that there exists $i \in \{1,\cdots,n\}$ with
$x_{i}(0)>0$ and $\lambda_{i}<S_{\rm in}$, the variable $b(t)$ is
bounded from below by a positive number for any $t>0$. 
\end{enumerate}
\end{lem}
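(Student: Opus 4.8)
The plan is to establish the three claims of Lemma~\ref{Lemme1} in order, since each builds on the previous one.

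\medskip

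\noindent\textbf{Part (1).} First I would sum the $s$ and $b$ dynamics in system~\eqref{sys_n_sbp}. Adding $\dot s$ and $\dot b$, the growth terms $\bar\mu(s,p)b$ cancel, leaving the linear equation
\[
\frac{d}{dt}(s+b) = D(S_{\rm in}-s) - Db = D\big(S_{\rm in}-(s+b)\big).
\]
Setting $z := s+b-S_{\rm in}$, this reads $\dot z = -Dz$, so $z(t)=z(0)e^{-Dt}\to 0$, which gives $\lim_{t\to+\infty}(s(t)+b(t))=S_{\rm in}$ for \emph{any} initial condition. This is the easiest part; it is just the classical conservation-of-mass reduction.

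\medskip

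\noindent\textbf{Part (2).} Here I would work with the original variable $x_j$ and exploit the ordering of the break-even concentrations. From Part~(1), $s(t)+b(t)\to S_{\rm in}$ and, since all variables stay nonnegative, $\limsup_{t\to+\infty}s(t)\le S_{\rm in}$. If $\lambda_j\ge S_{\rm in}$, then by definition of the break-even concentration and monotonicity of $\mu_j$ (Assumption~\ref{assumption1}), one has $\mu_j(s)\le\mu_j(S_{\rm in})\le\mu_j(\lambda_j)=D$ whenever $s\le S_{\rm in}$. The subtlety is that $s(t)$ may exceed $S_{\rm in}$ during transients, so I would fix $\varepsilon>0$ and pick $T$ with $s(t)\le S_{\rm in}+\varepsilon$ for $t\ge T$; choosing $\varepsilon$ small (or treating the strict/degenerate case $\lambda_j=S_{\rm in}$ separately) makes $\mu_j(s(t))-D$ eventually negative, bounded away from $0$ when $\lambda_j>S_{\rm in}$. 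Then $\dot x_j = (\mu_j(s)-D)x_j$ forces exponential decay of $x_j$. The borderline case $\lambda_j=S_{\rm in}$ is the one to handle with care, since there the decay rate degenerates to $0$; I expect one argues that $s(t)$ cannot remain pinned at $S_{\rm in}$ unless the system is already at equilibrium, or invokes a comparison argument to still conclude $x_j\to 0$.

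\medskip

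\noindent\textbf{Part (3).} This is the part I expect to be the main obstacle, because it asserts a \emph{uniform positive lower bound} on the total biomass rather than a mere limit. The strategy is a contradiction/persistence argument: suppose $b(t)$ is not bounded below by any positive constant, i.e.\ $\liminf_{t\to+\infty}b(t)=0$ (or $b$ dips arbitrarily close to $0$). From Part~(1), $b$ small forces $s$ close to $S_{\rm in}$; since $\lambda_i<S_{\rm in}$ by hypothesis, monotonicity gives $\mu_i(s)\ge\mu_i(S_{\rm in})>\mu_i(\lambda_i)=D$, so species $i$ has strictly positive net growth $\mu_i(s)-D>0$ in a neighborhood of $s=S_{\rm in}$. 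I would make this quantitative: choose $\eta>0$ so that $s\ge S_{\rm in}-\eta$ implies $\mu_i(s)-D\ge\delta>0$, and show that whenever $b$ becomes small enough that $s$ enters $[S_{\rm in}-\eta,\infty)$, the component $x_i$ (hence $b\ge x_i$) grows, contradicting the assumption that $b$ returns to arbitrarily small values. Making this rigorous requires a careful persistence-type argument controlling the interplay between $s$ and $x_i$; one convenient route is to track $x_i$ directly, show it is bounded below along the trajectory, and conclude $b\ge x_i$ stays positive. The delicate point is ruling out the scenario where $x_i$ leaks to $0$ while $s$ oscillates, which is precisely where the sign of $\mu_i(s)-D$ near $S_{\rm in}$ must be used decisively.
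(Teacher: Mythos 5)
Your part (1) is correct and is exactly the paper's argument (the mass $m=s+b$ solves $\dot m = D(S_{\rm in}-m)$). The genuine problems are in parts (2) and (3), and in both cases the missing ingredient is the same kind of idea: you must couple the biomass back to the substrate through mass conservation, rather than treat $s(t)$ as an exogenous signal that is merely ``eventually close to $S_{\rm in}$''.

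In part (2) you explicitly leave the borderline case $\lambda_j=S_{\rm in}$ open, and neither of your suggested escapes closes it: $s(t)$ need not be ``pinned'' at $S_{\rm in}$ for the difficulty to occur — it suffices that $s(t)$ approaches $S_{\rm in}$ slowly from below, in which case the rate $\mu_j(s(t))-D$ tends to $0$ and no decay of $x_j$ follows from sign considerations alone. The paper's resolution, which covers $\lambda_j>S_{\rm in}$ and $\lambda_j=S_{\rm in}$ simultaneously, is a self-limiting comparison: since $s+x_j\le m$ and $m(t)\to S_{\rm in}$, for $t$ large one has $\dot x_j \le \bigl(\mu_j(S_{\rm in}+\epsilon/2-x_j)-D\bigr)x_j$. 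Hence on the set $\{x_j\ge\epsilon\}$ the substrate is forced down to $s\le S_{\rm in}-\epsilon/2$, \emph{strictly} below $\lambda_j$, which gives the uniform bound $\dot x_j\le -\eta\epsilon$ with $\eta=D-\mu_j(S_{\rm in}-\epsilon/2)>0$; so $x_j$ leaves $\{x_j\ge\epsilon\}$ in finite time and cannot re-enter, and $\epsilon$ being arbitrary, $x_j\to 0$. The key point you are missing is that a non-small $x_j$ \emph{itself} depletes $s$ strictly below the break-even value.

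In part (3), your ``convenient route'' — track $x_i$ directly, show it is bounded below, conclude $b\ge x_i$ — would fail: $x_i$ is in general \emph{not} bounded below by a positive constant. If $x_1(0)>0$ and $\lambda_1<\lambda_i<S_{\rm in}$, then Proposition~\ref{mainresult} itself forces $x_i(t)\to 0$; nothing in the hypotheses of the lemma singles out a persistent species, and proving persistence of the best competitor present is essentially the main theorem, so this route is either false or circular. The contradiction variant has the same hole: during an excursion where $b$ is small, $x_i$ does grow, but it may enter that excursion at an uncontrollably small value (having decayed while $s$ was below $\lambda_i$), and $b$ itself need not increase there, because the species with $\lambda_j\ge S_{\rm in}$ keep decaying; no quantitative contradiction results. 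The missing idea is \emph{aggregation}: let $k$ be the largest index with $\lambda_k<S_{\rm in}$ and set $b_0:=x_1+\cdots+x_k$. Then $\dot b_0\ge\bigl(\min_{j\le k}\mu_j(s)-D\bigr)b_0$, and — using both part (1) and part (2) to write $s=m-b_0-\sum_{j>k}x_j>S_{\rm in}-b_0-\eta/2$ for $t$ large — on the region $\{b_0<\eta/2\}$ one eventually has $\dot b_0\ge\epsilon\,b_0>0$. Since $b_0$ is positive for all finite times, it exits $[0,\eta/2]$ in finite time and can never re-enter, which bounds $b\ge b_0$ below directly, with no contradiction argument. Note that the smallness region must be defined by $b_0$, not by $b$, precisely so that the minimum in the differential inequality runs only over the species that actually grow near $s=S_{\rm in}$.
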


\begin{proof}

Consider the \emph{total mass}  $m:=b+s$ of the system. Then
$m$ solves the linear differential equation $\dot{m}=D(S_{\rm in}-m)$, which
posses an unique equilibrium $m^\star= S_{\rm in}$, that is  moreover
globally asymptotically stable.

\medskip

Consider first species $j$ such that $\lambda_{j}\geq S_{\rm in}$ (if
it exists). Fix $\epsilon>0$. Then $\eta=D-\mu_{j}(S_{\rm
  in}-\epsilon/2)$ is a positive number.
As $m(\cdot)$ converges to $S_{\rm in}$,
there exits $T>0$ such that $s(t)+x_{j}(t)\leq m(t)<S_{\rm in}+\epsilon/2$
for any $t>T$. This implies that the dynamics of species $j$ satisfies
\begin{equation*}
\dot x_{j}(t)\leq \left(\mu_{j}(S_{\rm in}-x_{j}(t)+\epsilon/2)-D\right)x_{j}(t)=:\phi_{j}(x_{j}(t))
\end{equation*}
for any $t>T$. The function $\phi_{j}$ has the property
\begin{equation*}
x_{j}\geq \epsilon  \; \Rightarrow \; \phi_{j}(x_{j}) \leq 
\left(\mu_{j}(S_{\rm in}-\epsilon/2)-D\right)x_{j}\leq -\eta\epsilon <0.
\end{equation*}
Therefore, the variable $x_{j}(t)$ exits the domain $\{
x_{j}\geq \epsilon\}$ in finite time and stays outside for any future
time, i.e. there exists $T'>T$ such that $x_{j}(t)<\epsilon$ for any
$t >T'$. This statement is obtained for any arbitrary $\epsilon$, which proves
the convergence of the $x_{j}(\cdot)$ towards $0$.

\medskip 

Let $k$ be the maximal index such that $\lambda_{k}<S_{\rm in}$
(which exists by Assumption \ref{assumption2}) and
denote $b_{0}:=x_{1}+\cdots +x_{k}$.
For any $i\leq k$, notice that one has
\begin{equation*}
\mu_{i}(s)x_{i} \geq \min_{1\leq j \leq k} \mu_j(s) x_i
\end{equation*}
for any $s$ and $x_{i}$, and by simple addition, 
\begin{equation}\label{here}
\sum_{i=1}^k \mu_i(s) x_i  \geq  \min_{1\leq j \leq k} \mu_j(s) \Big(
\sum_{i=1}^k x_i \Big)= \min_{1\leq j \leq k} \mu_j(s) b_{0}.
\end{equation}
The dynamics of $b_{0}$ writes
\begin{equation*}
\dot{b}_{0} = \sum_{i=1}^k \dot{x}_i = \sum_{i=1}^k
\mu_i(s)x_i-D\sum_{i=1}^k x_{i} \\
\end{equation*}
and by inequality \eqref{here}, one has $\dot{b}_{0}(t) \geq
\Psi(t,b_{0}(t))$ for any $t>0$,
where $\Psi$ is defined as follows
\begin{equation} \label{defPsi}
\Psi(t,b_{0}):=\big(\min_{1\leq i \leq k} \mu_i(s(t))-D\big)\, b_{0}.
\end{equation}
As the inequality $\mu_{i}(S_{\rm in})>D$ is
fulfilled for any $i \leq k$, and the functions
$\mu_{i}$ are continuous, there exists
$\epsilon>0$ and $\eta>0$ such that
\begin{equation} \label{ineqsigma}
\min_{1\leq i \leq k} \mu_i(\sigma)-D>\epsilon \mbox{ for any } \sigma>S_{\rm in }-\eta.
\end{equation}
Since $m(t)$ converges to $S_{\rm in}$ and $x_{j}(t)$ converges to $0$ for
any $j>k$, there is a time $T^\star\geq 0$ such that 
\begin{equation*}
s(t)=m(t)-b_{0}(t)-\sum_{j\geq k}x_{j}(t)>S_{\rm in}-b_{0}(t)-\frac{\eta}{2} \mbox{ for
  any } t > T^\star.
\end{equation*}
Then, by inequality \eqref{ineqsigma}, the function $\Psi$ defined in \eqref{defPsi} fulfills the following property.
\[ 
\left\{ t>T^\star, b_{0} < \tfrac{\eta}{2} \right\} \Rightarrow \Psi(t,b_{0}) \geq
\big( \min_{1\leq i \leq k} \mu_i(S_{\rm in}-\eta)-D \big)b_{0} \geq \epsilon \, b_{0}
.\]
As one has $b_{0}(0)>0$ by hypothesis, $b_{0}(t)$ is strictly positive for any
$t>T^\star$ and it follows
that $b_{0}$ can not stay or enter into the interval $[0,\tfrac{\eta}{2}]$
for times larger than $T^\star $. Then the total biomass 
$b(t)\geq b_{0}(t)$ is bounded from below by $\tfrac{\eta}{2}$ in finite time.
\end{proof}

\subsection{Frame on the substrate's dynamics}

By Assumption \ref{assumption2}, growth functions are ordered on each
interval $[\lambda_{i},\lambda_{i+1}]$  (see Figure
\ref{growthsFunctionsFigure}) i.e. for all $i\in\{1\cdots
n-1\}$, one has
\[
\mu_{i}(s)>\mu_{j}(s) , \quad \forall j > i , \quad \forall s \in
[\lambda_{i},\lambda_{i+1}].
\]
By continuity of $\mu_{i}(\cdot)$, there are numbers $\nu>0$,  $s_{i}^-<\lambda_{i}$ and  $s_{i}^+>\lambda_{i+1}$ such that
\begin{equation}
\label{nu}
\mu_{i}(s)>\mu_{j}(s)+\nu ,  \quad \forall s \in [s_{i}^-,s_{i}^+],
\quad \forall j > i.
\end{equation}

By monotonicity of $\mu_{i}$, we have $\mu_{i}(s_{i}^-)<D$ and
$\mu_{i}(s_{i}^+)>D$. Moreover, one has
\begin{eqnarray*}
j<i  & \Rightarrow &
\mu_{j}(s_{i}^+)>\mu_{j}(\lambda_{i})>\mu_{j}(\lambda_{j})=D\\
s_{i}^+ > \lambda_{i+1} & \Rightarrow & \mu_{i+1}(s_{i}^+) > D
\end{eqnarray*}
Therefore, the numbers
\begin{equation}
\label{gamma_pm}
\left\{\begin{array}{lll}
\gamma^- & := & D-\mu_{1}(s_{1}^-)\\
\gamma^+ & := & \min_{2\leq i\leq  n}\min_{j<i}\mu_{j}(s_{i-1}^+)-D
\end{array}\right.
\end{equation}
are positive. We define also the following numbers.
\begin{equation}
\label{D_pm}
D^-:=D-\frac{\gamma^-}{2} , \quad D^+:=D+\frac{\gamma^+}{2}  \ .
\end{equation}

\begin{lem}\label{PropSubstratEncadrement}
\label{Lemme2}
Assume one has $\lambda_{1}<S_{\rm in}$ with Assumptions
\ref{assumption1} and \ref{assumption2}. For any non-negative
initial condition with $x_{1}(0)>0$, there exists $T>0$ such
that $\dot s(t) \in [\Phi^-(t,s(t)),\Phi^+(t,s(t))]$ for any $t>T$, where
\begin{equation}
\label{defPhi}
\left\{\begin{array}{rll}
\Phi^-(t,s) & := & [D^{-}-\bar\mu(s,p(t))]\,b(t)\\
\Phi^+(t,s) & := & [D^{+}-\bar\mu(s,p(t))]\,b(t).
\end{array}\right.
\end{equation}
\end{lem}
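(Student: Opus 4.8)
The plan is to compare the exact substrate dynamics from \eqref{sys_n_sbp}, namely $\dot s = D(S_{\rm in}-s)-\bar\mu(s,p)b$, with the two candidate bounds $\Phi^\pm$. The decisive observation is that the term $\bar\mu(s,p(t))\,b(t)$ occurs in all three expressions and therefore cancels when one forms the differences. Explicitly, I would compute
\[
\dot s(t)-\Phi^-(t,s(t)) = D(S_{\rm in}-s(t))-D^-b(t), \qquad \dot s(t)-\Phi^+(t,s(t)) = D(S_{\rm in}-s(t))-D^+b(t),
\]
so that the whole question reduces to controlling the signs of two affine expressions in $s$ and $b$ that no longer involve the growth functions at all.

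Next I would reintroduce the total mass $m=b+s$ studied in Lemma \ref{Lemme1}, writing $S_{\rm in}-s=(S_{\rm in}-m)+b$. Substituting and using the definitions \eqref{D_pm} of $D^\pm$ yields the clean identities
\[
\dot s-\Phi^- = \tfrac{\gamma^-}{2}\,b + D(S_{\rm in}-m), \qquad \dot s-\Phi^+ = -\tfrac{\gamma^+}{2}\,b + D(S_{\rm in}-m).
\]
Since $\gamma^\pm>0$ by \eqref{gamma_pm}, the leading biomass terms carry the correct strict signs, and only the residual term $D(S_{\rm in}-m)$ can spoil them.

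To finish, I would invoke the two relevant conclusions of Lemma \ref{Lemme1}. Part (1) gives $m(t)\to S_{\rm in}$, hence $D(S_{\rm in}-m(t))\to 0$; part (3), applicable because the hypothesis $x_1(0)>0$ together with $\lambda_1<S_{\rm in}$ provides an index $i=1$ with $x_i(0)>0$ and $\lambda_i<S_{\rm in}$, supplies a constant $\beta>0$ and a time $T_1$ with $b(t)\geq\beta$ for $t>T_1$. Choosing $T\geq T_1$ large enough that $D\,|S_{\rm in}-m(t)|<\tfrac12\min(\gamma^-,\gamma^+)\,\beta$ for all $t>T$ then forces $\dot s-\Phi^-\geq\tfrac{\gamma^-}{2}\beta-D|S_{\rm in}-m|>0$ and $\dot s-\Phi^+\leq-\tfrac{\gamma^+}{2}\beta+D|S_{\rm in}-m|<0$, which is exactly the claimed framing $\dot s(t)\in[\Phi^-(t,s(t)),\Phi^+(t,s(t))]$.

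The computations are elementary, so there is no genuine analytic obstacle; the only point requiring care is that convergence of $m$ alone does not suffice. Without the uniform positive lower bound on $b$ furnished by Lemma \ref{Lemme1}(3), the vanishing residual $D(S_{\rm in}-m)$ could in principle dominate an arbitrarily small biomass term, so it is precisely the persistence of the total biomass that allows a single time $T$ to work for both inequalities simultaneously.
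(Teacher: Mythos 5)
Your proof is correct and is essentially the paper's own argument in rearranged form: the paper introduces $z:=(S_{\rm in}-s)/b=1+(S_{\rm in}-m)/b$ and shows $Dz(t)\in[D^-,D^+]$ for large $t$, which, multiplied by $b(t)>0$, is exactly your pair of sign conditions $\dot s-\Phi^-\geq 0$ and $\dot s-\Phi^+\leq 0$. Both versions rest on the same two ingredients from Lemma \ref{Lemme1} — convergence of $m=b+s$ to $S_{\rm in}$ and the uniform positive lower bound on $b$ — so there is nothing to add.
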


\begin{proof}
From Lemma \ref{Lemme1}, we know that $b(\cdot)$ is bounded from below
by a positive number. Consider the function
\[
z:=\frac{S_{in}-s}{b}=1+\frac{S_{in}-m}{b} .
\]
Since $m(\cdot)$ converges to $S_{in}$ and $b(\cdot)$ is bounded form
below, $z(t)$ converges to $1$ when $t$ tends to $+\infty$. 
Furthermore, for $t>T$ with $T$ large enough, one has $Dz(t)
\in [D^-,D^+]$.
Remark that the substrate dynamics can be written as follows
\[
\dot s = \Phi(t,s) := b(t)\,[Dz(t)-\bar\mu(s,p(t))] \ ,
\]
and one then obtains the inequalities $\Phi^-(t,s)\leq \Phi(t,s)\leq \Phi^+(t,s)$
for any $t>T$.
\end{proof}

\subsection{Extinction of the species by induction}
\label{secrecursion}

\begin{prop}\label{MainProp}
Assume one has $\lambda_{1}<S_{\rm in}$ with Assumptions
\ref{assumption1} and \ref{assumption2}.
Consider a solution of system \eqref{sys_n_souches} with $x_{1}(0)>0$.
Then the property
\[
({\mathcal{P}}_{i}): \quad 
\left\{
\begin{array}{ll}
i. & \ds \mbox{There exists } T_{i}>0 \mbox{  s.t. } s(t) \in I_{i}:=[s_{1}^-,s_{i}^+] , \; \forall t > T_{i}\\
ii. & \ds \lim_{t\to+\infty} p_{j}(t)= 0 , \; \forall j>i .
\end{array}\right.
\]
holds for all $i\in \{1,\ldots, n-1 \}$.
\end{prop}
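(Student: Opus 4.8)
The plan is to prove the two clauses of $\mathcal{P}_i$ by a backward induction on $i=n-1,n-2,\dots,1$, treating the extinction clause (ii) and the substrate-confinement clause (i) as a \emph{coupled} pair that I establish together at each level. The payoff is immediate: at the bottom $\mathcal{P}_1$ reads $p_j\to0$ for all $j\ge2$, i.e. $p_1\to1$, which combined with Lemma \ref{Lemme1}(1) pins the state down to $(\lambda_1,S_{\rm in}-\lambda_1,0,\dots,0)$, so this Proposition is the whole engine of Proposition \ref{mainresult}. The inductive hypothesis $\mathcal{P}_{i+1}$ supplies two facts I will lean on: $s(t)\in I_{i+1}=[s_1^-,s_{i+1}^+]$ eventually, and $p_j\to0$ for every $j>i+1$.

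First I would dispose of the parts that need no coupling. The left endpoint is universal: whenever $s\le s_1^-$ one has $\bar\mu(s,p)\le\max_j\mu_j(s_1^-)=\mu_1(s_1^-)<D^-$ (the equality is \eqref{nu} with $i=1$, the strict inequality is the definition of $\gamma^-$), so $\Phi^-(t,s)\ge\tfrac{\gamma^-}{2}b(t)\ge\tfrac{\gamma^-}{2}\beta>0$ using the positive lower bound $\beta$ on $b$ from Lemma \ref{Lemme1}(3); by the comparison of Lemma \ref{Lemme2}, $s$ leaves $\{s\le s_1^-\}$ in finite time and never returns, handling the left endpoint of every $I_i$ at once. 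The extinction step is the one that genuinely uses the proportion $r_{i+1}:=x_1^{-1}x_{i+1}$, for which $\tfrac{d}{dt}\ln r_{i+1}=\mu_{i+1}(s)-\mu_1(s)$. The geometric point I would exploit is that $s_i^+$ can be chosen close enough to $\lambda_{i+1}$ (compatibly with \eqref{nu}) that $\mu_1>\mu_{i+1}$ holds on all of $I_i$: on $[s_1^-,s_1^+]$ by \eqref{nu} with $i=1$; on $[s_1^+,\lambda_{i+1}]$ because there $\mu_{i+1}<D<\mu_1$; and slightly beyond $\lambda_{i+1}$ by continuity from $\mu_1(\lambda_{i+1})>D=\mu_{i+1}(\lambda_{i+1})$. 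Compactness then gives $c_i>0$ with $\mu_1-\mu_{i+1}\ge c_i$ on $I_i$, so \emph{once} clause (i) holds at level $i$ we get $\tfrac{d}{dt}\ln r_{i+1}\le-c_i$, hence $r_{i+1}\to0$ and $p_{i+1}\le r_{i+1}\to0$, which with the inherited $p_j\to0$ $(j>i+1)$ yields clause (ii).

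For the upper confinement I would evaluate $\bar\mu$ at the right endpoint: with $\mu_j(s_i^+)\ge D+\gamma^+$ for every $j\le i$ (the term of \eqref{gamma_pm} with upper index $i+1$) and $\mu_j\ge0$ for $j>i$, one has $\bar\mu(s_i^+,p)\ge(D+\gamma^+)\big(1-\sum_{j>i}p_j\big)$, which exceeds $D^+=D+\tfrac{\gamma^+}{2}$ as soon as $\sum_{j>i}p_j<\tfrac{\gamma^+}{2(D+\gamma^+)}=:\theta_i$; when that holds $\Phi^+(t,s_i^+)<0$ and Lemma \ref{Lemme2} traps $s$ below $s_i^+$. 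This is exactly where the difficulty concentrates: clause (i) at level $i$ needs $\sum_{j>i}p_j<\theta_i$, i.e. essentially $p_{i+1}<\theta_i$ (the higher terms being already $o(1)$), whereas the clean decay of $p_{i+1}$ needs clause (i) already in force — a circular dependence.

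The hard part will be to break this circularity, and I would do it by an ``enter and stay'' argument rather than by monotonicity. The invariant half is easy: if $p_{i+1}(t_0)<\theta_i$ with $s(t_0)\le s_i^+$ at some $t_0$, then $s$ cannot cross $s_i^+$ upward, so it stays in $I_i$, so $r_{i+1}$ and hence $p_{i+1}$ decrease, so the condition $p_{i+1}<\theta_i$ is self-maintaining and $r_{i+1}\to0$. The real work is to \emph{reach} such a $t_0$, and here I would use the substrate/biomass feedback: by Lemma \ref{Lemme1}(1) the total biomass is bounded, $b(t)\le b(t)+s(t)\to S_{\rm in}$, whereas on the band $(s_i^+,s_{i+1}^+]$ every $j\le i$ satisfies $\mu_j(s)-D\ge\gamma^+$, so $x_1+\cdots+x_i$ grows exponentially there; a bounded quantity cannot sustain exponential growth on a set of infinite measure, which caps the time $s$ may spend above $s_i^+$ and forces $r_{i+1}$ to dip below $\theta_i$. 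Making this quantitative — controlling the excursions of $s$ into $(s_i^+,s_{i+1}^+]$ using the confinement $s\le s_{i+1}^+$ already furnished by $\mathcal{P}_{i+1}$ — is the delicate estimate on which the whole induction turns; note that the base case $i=n-1$ is not special, since there the higher proportions are simply absent and the same coupled argument applies verbatim.
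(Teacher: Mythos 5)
Your skeleton is in fact the paper's: backward induction, the ratios $r_{i+1}=x_{i+1}/x_1$, the comparison frame of Lemma \ref{Lemme2}, the universal argument at the left endpoint $s_1^-$, and exponential decay of $r_{i+1}$ once $s$ is confined in $I_i$; your remark that $s_i^+$ must be taken close enough to $\lambda_{i+1}$ for $\mu_1-\mu_{i+1}$ to have a uniform positive gap on the \emph{whole} of $I_i$ (and not only on $[s_1^-,s_1^+]$, which is all \eqref{nu} literally covers) is a genuine point that the paper glosses over. The gap is in your upper-confinement step. When you bound $\bar\mu$ at $s\ge s_i^+$ you keep only the species $j\le i$ and discard species $i+1$ through the crude bound $\mu_{i+1}\ge 0$; this is exactly what manufactures your ``circular dependence''. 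But the circularity is spurious: by construction $s_i^+>\lambda_{i+1}$, so species $i+1$ is itself a grower at and above $s_i^+$, namely $\mu_{i+1}(s)\ge\mu_{i+1}(s_i^+)>D$ for all $s\ge s_i^+$. Keeping its contribution,
\[
\bar\mu(s,p)\ \ge\ \sum_{j\le i+1}\mu_j(s)\,p_j\ \ge\ \Bigl(\min_{j\le i+1}\mu_j(s_i^+)\Bigr)\Bigl(1-\sum_{j>i+1}p_j\Bigr),
\]
where $\min_{j\le i+1}\mu_j(s_i^+)>D$, with a margin that is uniform in $i$ provided the terms $j=i+1$ are included in the inner minimum defining $\gamma^+$ in \eqref{gamma_pm} (they are positive for the same reason the others are). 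Hence only $\sum_{j>i+1}p_j$ needs to be small, and that is precisely clause (ii) of the inductive hypothesis $\mathcal{P}_{i+1}$: for $t$ large one gets $\Phi^+(t,s)<0$ \emph{pointwise} whenever $s>s_i^+$, so $s$ enters $I_i$ in finite time and can never leave. There is no ``reach $t_0$'' problem at all; this is the paper's inductive step, phrased there as trapping $s$ below $s_{i-1}^+$ using $\min_{j\le i}\mu_j(s_{i-1}^+)$ together with the smallness of $\sum_{j>i}p_j$.

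Moreover, the excursion argument you propose to break the circularity is not valid, so the proposal cannot rest on it. You claim that since $x_1+\cdots+x_i$ grows at rate $\ge\gamma^+$ while $s\in(s_i^+,s_{i+1}^+]$ and remains bounded, the set $\{t:\ s(t)>s_i^+\}$ must have finite measure. But that quantity is not monotone: its logarithmic derivative is a weighted average of the $\mu_j(s)-D$, $j\le i$, which can be as negative as $-D$ whenever $s(t)<\lambda_i$ --- a region well inside $I_i$ that the trajectory may keep revisiting. Boundedness therefore yields only
\[
\gamma^{+}\cdot\mathrm{meas}\{t\le T:\ s(t)>s_i^{+}\}\ \le\ C\ +\ D\cdot\mathrm{meas}\{t\le T:\ s(t)<\lambda_i\},
\]
and the right-hand side may grow linearly in $T$: exponential growth during excursions above $s_i^+$ can be compensated by decay during long sojourns at low substrate. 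Nothing available at that stage of the induction (in particular, no positive lower bound on $x_1$, only on the total biomass $b$) excludes this oscillatory compensation; ruling it out is essentially the whole difficulty of the competitive exclusion principle, so your step assumes what it must prove. Replace it by the pointwise trap described above and the rest of your argument (left endpoint, uniform gap $\mu_1-\mu_{i+1}\ge c_i$ on $I_i$, decay of $r_{i+1}$ and $p_{i+1}\le r_{i+1}$) goes through and coincides with the paper's proof.
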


Figure \ref{figIi} illustrates the sets $I_{i}$ considered in the
backward induction of the proof.
\begin{figure}[ht!]
\begin{center}
\includegraphics[width=0.9\textwidth]{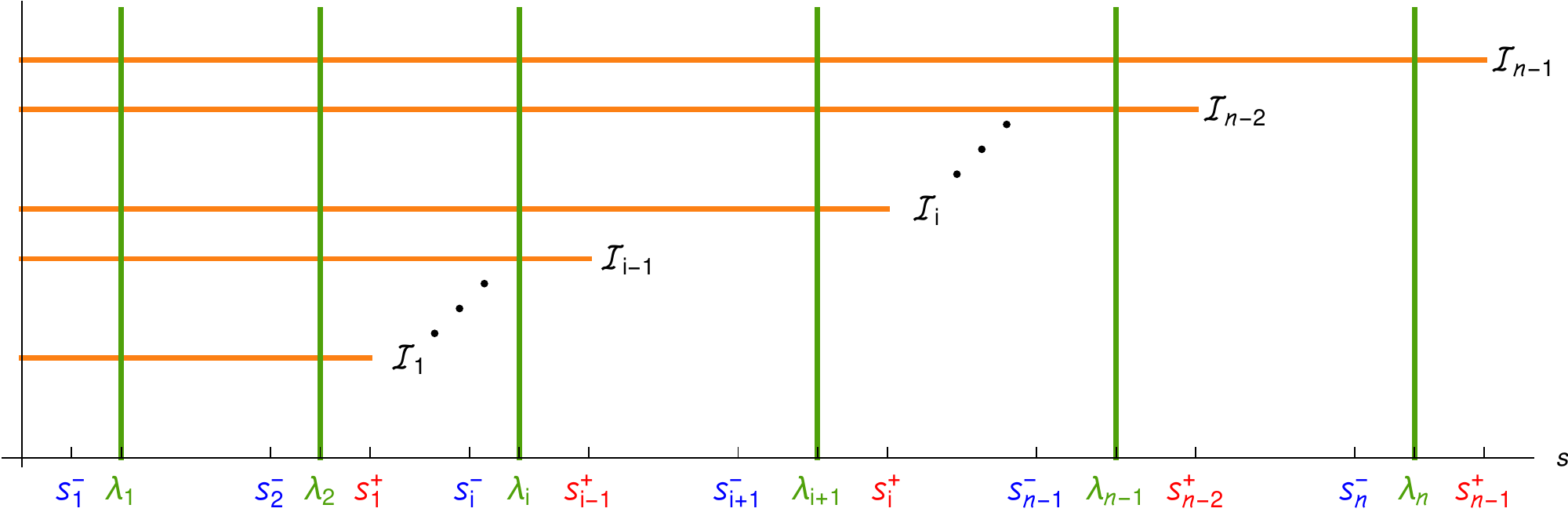}
\caption{Illustration of the intervals $I_{i}$ ($i=1\cdots n-1$)
\label{figIi}}
\end{center}
\end{figure}

\begin{proof}
As $x_{1}(0)>0$, the variable $x_{1}$ stays positive for any time and one can consider variables $r_{i}:=\tfrac{x_{i}}{x_{1}}$ for $i\in\{2,\cdots, n\}$,
whose dynamics is
\begin{equation}
\label{dyn_ri}
\dot r_{i}=[\mu_{i}(s(t))-\mu_{1}(s(t))]r_{i}.
\end{equation}
By monotonicity of functions $\mu_{i}(\cdot)$ and property \eqref{nu},
we can write
\begin{equation}
\label{propDmoins}
s<s_1^- \Rightarrow \max_{i}\mu_{i}(s) <
\max_{i}\mu_{i}(s_{1}^-)=\mu_{1}(s_{1}^-)
\leq D^--\frac{\gamma^-}{2} 
\end{equation}
where $\gamma^-$, $D^-$ are defined in \eqref{gamma_pm}, \eqref{D_pm}.
Similarly, one has
\begin{equation}
\label{propDplus}
s > s_{n-1}^+ \Rightarrow 
\min_{i} \mu_{i}(s) > \min_{i}\mu_{i}(s_{n-1}^+)
 \geq D^++\frac{\gamma^+}{2}
\end{equation}
where $\gamma^+$, $D^+$ are defined in \eqref{gamma_pm},
\eqref{D_pm}.

Consider a number $T>0$ given by Lemma \ref{Lemme2}. 
From Lemma \ref{Lemme1}, there exists a number $\eta>0$ such
that $b(t)\geq \tfrac{\eta}{2}$ for any $t>T$. 
It then follows from \eqref{propDmoins} and
(\ref{propDplus}) that the functions $\Phi^{\pm}$  defined in
\eqref{defPhi}
fulfill the following inequalities.
\begin{eqnarray*}
s<s_{1}^-, \;  t>T & \Rightarrow &
\Phi^-(t,s) \geq
b(t)\left(D^--\max_{i}\mu_{i}(s)\right)>\frac{\eta\,\gamma^-}{4}>0\\
s>s_{n-1}^+, \; t>T & \Rightarrow &
\Phi^+(t,s) \leq
b(t)\left(D^+-\min_{i}\mu_{i}(s)\right)<-\frac{\eta\,\gamma^+}{4}<0.
\end{eqnarray*}
Therefore the variable $s$ enters the interval $I_{n-1}$ in a finite time
$T_{n-1}>T$ and belongs to it for any future time.
Furthermore, the inequalities \eqref{nu} ensure to have 
$\mu_{n}(s)-\mu_{1}(s)<-\nu$ for any $s$ in the interval $I_{n-1}$, and then the
dynamics of $r_{n}$ satisfies
$\dot{r}_{n}\leq -\nu r_{n}$ for $t>T_{n-1}$. Thus $r_{n}$ converges
to $0$, and $x_{n}$ converges as well. Property
${\mathcal{P}}_{n-1}$ is then satisfied. 

\medskip

Assume that $\mathcal{P}_{i}$ is satisfied for an index $i \in
\{2,\cdots,n-1\}$ and let us show that $\mathcal{P}_{i-1}$ is
fulfilled.
Since the functions $p_{j}$ converge to $0$ for all $j>i$, there is $T'>T_{i}$ such that  
\[
\sum_{j>i} p_{j}(t) < \varepsilon:=\frac{\gamma^+/4}{D^+ + \gamma^+/2}, \quad
\forall t> T' .
\]
Then, for  $s>s_{i-1}^+$ and $t>T'$, the following inequality holds
\[
\begin{array}{lll}
\bar\mu(s,p(t)) & \geq & \sum_{j\leq i}\mu_{j}(s)p_{j}(t) \\
&\geq & \min_{j\leq i}
\mu_{j}(s_{i-1}^+)(1-\varepsilon)\\
&\geq& (D^+ +\tfrac{\gamma^+}{2})(1-\varepsilon)=D^++\frac{\gamma^+}{4}
\end{array}
\]
which provides the property of the function $\Phi^+$:
\[
s>s_{i-1}^+, \; t>T' \; \Rightarrow \; \Phi^+(t,s) \leq 
-\frac{\eta\,\gamma^+}{8}<0. 
\]
Thus, in a finite time $T_{i-1}>T'$, $s$ enters into the interval
$I_{i-1}$ and stays inside it for any future time. Furthermore, inequalities
\eqref{nu} lead to write $\dot{r}_{i}\leq-\nu r_{i}$ for
$t>T_{i-1}$, which shows that $x_{i}$ converges to zero.
Property $\mathcal{P}_{i-1}$ is then satisfied.
\end{proof}

\subsection{The case of identical break-even concentration}
\label{sec_identical}

We relax Assumption \ref{assumption2} allowing some $\lambda_i$  with $i>1$ to be identical and show that Proposition \ref{mainresult} is also satisfied.

If there exist $i$ and $\ell>1$ such that
$\lambda_1<\lambda_{i-\ell}(D)= \cdots = \lambda_i(D)<S_{\rm in}$, at
step $i$ in the induction of the proof, we replace species $i$ by the sum of species $i-\ell, \ldots, i$ and show property $\mathcal{P}_i$, that is  $\lim_{t\to + \infty} p_j(t)=0$ for all $j>i-\ell+1$. 

This can be done considering $\overline{r}_i:= \sum_{j=i-\ell}^i r_j$
instead of $r_i$, and remark that we have also $s_{i-1}^+ = \cdots =
s_{i-1-\ell}^+$. 
Then, one has
\[ 
\dot{\overline{r}_i} = \left[ \sum_{j=i-\ell}^{i}
  \alpha_j(t)\mu_j(s(t)) -\mu_1(s(t)) \right] r_i
\]  
where $\alpha_j(t) = \frac{r_j(t)}{\overline{r}_i(t)} >0$ with $\sum_{j=i-1}^{i} \alpha_j(t)=1$.  By 
 \eqref{nu}, we have
\[
\sum_{j=i-1}^i \alpha_j(t) \mu_j(s)< \mu_1(s)-\nu , \quad
\forall s\in [s_1^-,s_i^+]
\]
an then
 $\dot{\overline{r}}_i(t)<-\eta \overline{r}_i(t)$. Thus
 $\dot{\overline{r}_i} \leq \eta\, \overline{r}_i$ for $t>T$ which
 shows that $\overline{r}_i$ converges to zero as well as every $x_j$ with $i\leq j \leq i+\ell$. The rest of the proof is identical.

%In the simplest case of a chemostat with two species, identical break-even concentrations implies the coexistence of both species. Nevertheless, the coexistence is non generic in the sense that the set of values allowing the coexistence has zero measure. The analogous situation for a larger number of species requires that several species have their break-even concentration identical to $\lambda_1$. 

\subsection{Conclusion}
Now that we know that the variables $x_{j}$ converge to $0$ for any
$j>1$, we can easily show that $s$ tends to $\lambda_{1}$ as follows.

\medskip

By Lemma \ref{Lemme1} and Proposition \ref{MainProp}, $b(t)+s(t)$ tends to
$S_{\rm in}$ and the variables $b$ and $s$ are bounded from below by
positive numbers. Thus there exists numbers $\zeta>0$ and $T_{1}>0$
such that $s(t) \in [\zeta,S_{\rm in}-\zeta]$ for any $t>T_{1}$, and
accordingly to Proposition \ref{MainProp}, we can require to have $\lambda_{1} \in
(\zeta,S_{\rm in}-\zeta)$. We then consider the variable
\[
r(t)=\frac{\bar\mu(s(t),p(t))b(t)}{\mu_{1}(s(t))(S_{\rm in}-s(t))},
\quad t>T_{1}
\]
which tends to $1$ when $t$ tends to $+\infty$, and write the dynamics 
of $s$ as
\begin{equation}
\dot s = \Gamma(t,s) := (S_{\rm in}-s)(D-\mu_{1}(s)r(t))
\end{equation}
Take any $\epsilon>0$ sufficiently small to have
$[\lambda_{1}-\epsilon,\lambda_{1}+\epsilon] \subset [\zeta,S_{\rm in}-\zeta]$.
The function $\mu_{1}(\cdot)$ being assumed to be of class $C^1(\mathbb{R}^+)$, increasing and with
$\mu_{1}(\lambda_{1})=D$, there exists
$T_{2}>T_{1}$ and $\eta>0$ such that 
\begin{equation*}
\begin{array}{l}
t>T_{2}, \; s > \lambda_{1}+\epsilon \Rightarrow \mu_{1}(s)r(t)>D+\eta\\
t>T_{2}, \; s < \lambda_{1}-\epsilon \Rightarrow \mu_{1}(s)r(t)<D-\eta
\end{array}
\end{equation*}
Then, the function $\Gamma$ fulfills the following properties
\begin{equation*}
\begin{array}{l}
t>T_{2}, \; s \in [\lambda_{1}+\epsilon,S_{\rm in}-\zeta] \Rightarrow \Gamma(t,s)<-\eta\zeta<0\\
t>T_{2}, \; s \in [\zeta,\lambda_{1}-\epsilon] \Rightarrow
\Gamma(t,s)>\eta(S_{\rm in}-\zeta)>0
\end{array}
\end{equation*}
which allows to conclude that the variable $s$ converges to the
interval $[\lambda_{1}-\epsilon,\lambda_{1}+\epsilon]$, and this can
be obtained for any arbitrarily small $\epsilon>0$.

\section*{Acknowledgments}
The authors are thankful for having fruitful exchanges with
Tewfik Sari, Claude Lobry, Jérôme Harmand,
Kevin Cauvin and Romaric Condé about the
chemostat model. Mario Veruete is grateful for the support of the
National Council For Science and Technology of Mexico, and to his Ph.D.
supervisor, Dr. Matthieu Alfaro, for his continuous support.

\bibliographystyle{plain}
\bibliography{bibliography}

\end{document}